\theoremstyle{plain}
\newtheorem{theorem}{Theorem}
\newtheorem{lemma}{Lemma}
\newtheorem{corollary}{Corollary}
\newtheorem{definition}{Definition}
\newtheorem{question}{Question}
\newtheorem{example}{Example}
\title{High degree $b$-Niven numbers}
\author{Viorel Ni\c{t}ic\u{a}}
\address{Department of Mathematics\\ West Chester
University of Pennsylvania\\ West Chester, PA 19383, USA\\
vnitica@wcupa.edu}
\begin{document}

\begin{abstract} Let $b$ be a numeration base. A $b$-Niven number is one that is divisible by the sum of its base $b$ digits.
We introduce high degree $b$-Niven numbers. These are $b$-Niven numbers that have a power greater than $1$ that is $b$-Niven number. 
Our main result shows that for each degree there exists an infinite set of bases $b$ for which $b$-Niven numbers of that degree exist.
The high degree $b$-Niven numbers are given by explicit formulas and have all digits different from zero.
\end{abstract}

\maketitle

\section{Introduction}\label{sec:1}

Niven (or Harshad) numbers are numbers divisible by the sum of their decimal digits. Niven numbers have been extensively studied. See for instance Cai \cite{C}, Cooper and Kennedy \cite{CK}, De Koninck and Doyon \cite{KD}, and Grundman \cite{G}. Of interest are also $b$-Niven numbers, which are numbers divisible by the sum of their base $b$ digits. See for example Fredricksen, Iona\c scu, Luca, and St\u anic\u a \cite{FILS}. Some variants of Niven numbers can be found in Boscaro \cite{B1} and Bloem \cite{B2}.

In Ni\c tic\u a \cite{N} we introduce degree $2$ $b$-Niven numbers. These are $b$-Niven numbers for which the squares are $b$-Niven numbers. The goal of this paper is to introduce high degree $b$-Niven numbers and to show that for each degree there exists an infinite set of bases in which $b$-Niven numbers of that degree appear. We observe that the high degree $b$-Niven numbers shown in this paper are given by explicit formulas and have all digits different from zero. This distinction is important if one wants to avoid the trivial examples of $b$-Niven numbers given by the powers of $10_b$. 

In what follows let $b\ge 2$ be a numeration base. We denote by $s_b(N)$ the sum of the base $b$ digits of the integer $N$. If $x$ is a string of digits, we denote by $(x)^{\land k}$ the base $10$ integer obtained by repeating $x$ $k$-times. We denote by $[x]_b$ the value of the string in base $b$.

\begin{definition} Let $m\ge 1$ be an integer. An integer $N$ is called a \emph{degree $m$ $b$-Niven number} if $N$ and $N^m$ are $b$-Niven numbers. 
\end{definition}

It is shown in \cite{N} that if $b\equiv 3\pmod{4}$ then there exists an infinite set of degree $2$ $b$-Niven numbers. It is obvious that any power of $10_b$ is a degree $m$ $b$-Niven number for any $m$ and for any $b$.  Moreover, if $N$ is a degree $m$ $b$-Niven number then $10_b\times N$ is also a  degree $m$ $b$-Niven number. 

The following theorem gives a criterion for the existence of even degree $b$-Niven numbers with all digits different from zero.

\begin{theorem}\label{thm:1} Let $b\ge 2, k\ge 1,n\ge 1$ be integers. Consider the number:
\begin{equation*}
N_k=[b^{2^{k}}-1]_b.
\end{equation*}

Assume that $n=2^qp, p\ge 0$ odd, $q\ge 0$, $b\ge C_{2n}^n$, and $b=(4\ell+2)p+1, \ell \ge 0$. Then $N_k$ is a degree $2n$ $b$-Niven number.

In particular, for any even degree, there exists an infinite set of bases in which $b$-Niven numbers of that degree, with all digits different from zero, exist.
\end{theorem}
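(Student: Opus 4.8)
The plan is to verify both required divisibilities by writing down the base-$b$ digits of $N_k$ and of $N_k^{2n}$ explicitly. Put $M=2^{k}$, so that $N_k=b^{M}-1$ has base-$b$ representation consisting of $M$ copies of the digit $b-1$; in particular all its digits are nonzero and $s_b(N_k)=2^{k}(b-1)$. For the first divisibility I would use the factorization $b^{M}-1=(b-1)\prod_{j=0}^{k-1}\bigl(b^{2^{j}}+1\bigr)$. Since $b=(4\ell+2)p+1$ is odd, each of the $k$ factors $b^{2^{j}}+1$ is even, so $2^{k}\mid\prod_{j=0}^{k-1}\bigl(b^{2^{j}}+1\bigr)$ and hence $s_b(N_k)=2^{k}(b-1)\mid N_k$. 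Thus $N_k$ is a $b$-Niven number.

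The heart of the argument is to determine the base-$b$ expansion of $N_k^{2n}$. Expanding, $N_k^{2n}=\sum_{i=0}^{2n}(-1)^{i}\binom{2n}{i}b^{Mi}$, and the hypothesis $b\ge\binom{2n}{n}\ge\binom{2n}{i}$ guarantees that every coefficient is smaller than $b$. The difficulty is that the signs alternate, so I would resolve them by pairing the block at position $2j-1$ with the block at position $2j$ (for $j=1,\dots,n$) and borrowing: each pair contributes $b^{(2j-1)M}\bigl(\binom{2n}{2j}b^{M}-\binom{2n}{2j-1}\bigr)$, whose base-$b$ form is the digit $\binom{2n}{2j}-1$, then $M-1$ copies of $b-1$, then $b-\binom{2n}{2j-1}$. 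Together with the bottom digit $1$ coming from $\binom{2n}{0}$, and noting that consecutive blocks are separated by runs of zeros and therefore do not interfere, this yields the complete digit string. The main obstacle I anticipate is exactly this carry/borrow bookkeeping, together with the boundary case $b=\binom{2n}{n}$, where the middle coefficient equals $b$ and forces an extra carry; for the existence conclusion it is harmless to assume the strict inequality $b>\binom{2n}{n}$.

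Summing these digits, each of the $n$ paired blocks contributes $M(b-1)+\binom{2n}{2j}-\binom{2n}{2j-1}$, so
\[
s_b(N_k^{2n})=1+nM(b-1)+\sum_{j=1}^{n}\Bigl(\binom{2n}{2j}-\binom{2n}{2j-1}\Bigr).
\]
Using $\sum_{i\text{ even}}\binom{2n}{i}=\sum_{i\text{ odd}}\binom{2n}{i}=2^{2n-1}$, the remaining sum equals $-1$, so that $s_b(N_k^{2n})=n\,2^{k}(b-1)=n\,s_b(N_k)$.

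Finally I would check that $s_b(N_k^{2n})=n\,2^{k}(b-1)$ divides $N_k^{2n}$. Writing $b^{M}-1=2^{k}(b-1)W$ from the first step gives $N_k^{2n}=2^{2nk}(b-1)^{2n}W^{2n}$, while $n\,2^{k}(b-1)=2^{q+k}p(b-1)$. Comparing $2$-adic valuations, $(2n-1)k\ge 2n-1\ge q$ because $n=2^{q}p\ge 2^{q}$; and since $b=(4\ell+2)p+1$ forces $p\mid b-1$, the odd factor $p$ divides $(b-1)^{2n-1}$. Hence the quotient $N_k^{2n}/\bigl(n\,2^{k}(b-1)\bigr)$ is an integer, so $N_k^{2n}$ is $b$-Niven and $N_k$ is a degree $2n$ $b$-Niven number. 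For the last assertion, given any even degree, write it as $2n$ with $n=2^{q}p$, $p$ odd, and let $\ell$ range over all integers large enough that $b=(4\ell+2)p+1>\binom{2n}{n}$; this produces infinitely many admissible bases $b$, in each of which $N_k=b^{2^{k}}-1$ is a degree $2n$ $b$-Niven number with all digits equal to $b-1\ne 0$.
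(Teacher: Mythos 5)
Your proof is correct and follows essentially the same route as the paper: the same pairing of consecutive binomial terms to read off the base-$b$ digits of $N_k^{2n}$, the same digit-sum computation yielding $n\,2^{k}(b-1)$, and the same final divisibility check (you use the factorization $b^{2^{k}}-1=(b-1)\prod_{j=0}^{k-1}(b^{2^{j}}+1)$ and a $2$-adic valuation count where the paper invokes Euler's theorem and an explicit chain of divisibilities, but these are cosmetic differences). One small note: your worry about the boundary case $b=C_{2n}^{n}$ is unfounded, since at equality the corresponding digit is either $C_{2n}^{n}-1=b-1$ or $b-C_{2n}^{n}=0$, both legal, so no extra carry occurs and the restriction to strict inequality is unnecessary.
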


The proof of Theorem \ref{thm:1} is done in Section \ref{sec:2}. 

The following lemma gives estimates for the size of $b$ in Theorem \ref{thm:1}. 

\begin{lemma} If $n\ge 1$ is a positive integer, then:
\begin{equation*}
\frac{4^n}{\sqrt{n}}\le C_{2n}^n\le \frac{4^n}{\root 3 \of n}.
\end{equation*}
\end{lemma}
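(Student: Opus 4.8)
The plan is to track the normalized quantity $a_n=C_{2n}^n/4^n$, which carries all the arithmetic of the central binomial coefficient. The engine of the argument is the elementary ratio
\[
\frac{a_{n+1}}{a_n}=\frac{C_{2n+2}^{n+1}}{4\,C_{2n}^n}=\frac{(2n+2)(2n+1)}{4(n+1)^2}=\frac{2n+1}{2n+2},
\]
so that $a_1=\tfrac12$ and $a_n=\tfrac12\prod_{k=1}^{n-1}\frac{2k+1}{2k+2}$ is a Wallis-type product decreasing to $0$. Since both target estimates have the shape $a_n\asymp n^{-\alpha}$, the idea is not to bound $a_n$ directly but to show that $a_n$ multiplied by a suitable power of $n$ is monotone, and then to pin that auxiliary sequence between its first value and its limit.

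For the lower bound I would study $b_n=a_n\sqrt n$ and compute
\[
\frac{b_{n+1}}{b_n}=\frac{2n+1}{2n+2}\sqrt{\frac{n+1}{n}}.
\]
Squaring, the relation $b_{n+1}\ge b_n$ is equivalent to $(2n+1)^2(n+1)\ge(2n+2)^2 n$, which after expansion reduces to $n+1\ge 0$; hence $b_n$ is increasing, and $b_n\ge b_1$ already produces a lower bound of the stated form $C_{2n}^n\ge c\,4^n/\sqrt n$. The sharp constant here is controlled by the Wallis limit $b_n\to 1/\sqrt\pi$, so this branch is essentially optimal up to the value of $c$.

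For the upper bound I would run the symmetric argument with $c_n=a_n\,\sqrt[3]{n}$, where
\[
\frac{c_{n+1}}{c_n}=\frac{2n+1}{2n+2}\Big(\frac{n+1}{n}\Big)^{1/3}.
\]
Here $c_{n+1}\le c_n$ is equivalent, after cubing and cancelling the common factor $n+1$, to $(2n+1)^3\le 8n(n+1)^2$, that is $4n^2+2n-1\ge 0$, which holds for every $n\ge1$; thus $c_n$ is decreasing and $c_n\le c_1$, giving $C_{2n}^n\le 4^n/\sqrt[3]{n}$. I do not expect a genuine obstacle in the analysis itself — each monotonicity step collapses to an elementary polynomial inequality in $n$ — so the delicate point is really the bookkeeping: verifying the base case $n=1$ and making the multiplicative constants match the claimed bounds exactly. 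A fully equivalent route, if one prefers a non-inductive proof, is to insert Robbins' explicit bounds $\sqrt{2\pi n}\,(n/e)^n e^{1/(12n+1)}\le n!\le\sqrt{2\pi n}\,(n/e)^n e^{1/(12n)}$ into $C_{2n}^n=(2n)!/(n!)^2$; this reproduces the true order $4^n/\sqrt{\pi n}$ and renders both inequalities transparent.
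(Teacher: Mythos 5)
Your upper-bound argument is complete and correct: the monotonicity of $c_n=\root 3 \of n\,C_{2n}^n/4^n$ reduces to the polynomial inequality $4n^2+2n-1\ge 0$, and $c_n\le c_1=\frac{1}{2}$ gives $C_{2n}^n\le \frac{4^n}{2\root 3 \of n}\le \frac{4^n}{\root 3 \of n}$, slightly stronger than claimed. The genuine gap is in the lower bound. Your method yields $b_n=\sqrt{n}\,C_{2n}^n/4^n\ge b_1=\frac{1}{2}$, i.e.\ $C_{2n}^n\ge \frac{4^n}{2\sqrt{n}}$, and you then declare this ``a lower bound of the stated form\ldots up to the value of $c$.'' That hedge cannot be repaired, because the inequality as printed asserts $b_n\ge 1$, while your own observation that $b_n$ increases to the Wallis limit $1/\sqrt{\pi}\approx 0.564$ forces $b_n<1$ for every $n$. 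Concretely, at $n=1$ the stated lower bound reads $4=4^1/\sqrt{1}\le C_2^1=2$, which is false, and $C_{2n}^n<\frac{4^n}{\sqrt{\pi n}}<\frac{4^n}{\sqrt{n}}$ fails the claim for every $n\ge 1$. So the statement you were asked to prove is itself incorrect; the honest conclusion of your computation is that the lemma should read $\frac{4^n}{2\sqrt{n}}\le C_{2n}^n$ (equivalently $\frac{4^n}{\sqrt{4n}}\le C_{2n}^n$), which your increasing-sequence argument establishes sharply, with equality at $n=1$.

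For what it is worth, the paper states this lemma without any proof, so there is nothing to compare your argument against on the paper's side; your ratio-monotonicity approach (or the Robbins/Stirling route you sketch) is the standard one and is exactly what exposes the error. Note also that for the only purpose the lemma serves in the paper --- estimating how large the base $b$ must be so that $b\ge C_{2n}^n$ holds --- it is the upper bound that does the work, and that half of your proof stands.
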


The following theorem gives a criterion for the existence of odd degree $b$-Niven numbers with all digits different from zero.

\begin{theorem}\label{thm:2} Let $b\ge 2, k\ge 1,n \ge 1$ be integers. Consider the numbers:
\begin{equation*}
N_k=[b^{2^{k}}-1]_b.
\end{equation*}

Assume that $\frac{n+1}{2}=2^qp, p\ge 0$ odd, $q\ge 0$, $b\ge C_{n}^{\frac{n+1}{2}}$, and $b=(4\ell+2)p+1, \ell\ge 0$. Then $N_k$ is a degree $n$ $b$-Niven number.

In particular, for any odd degree, there exists an infinite set of bases in which $b$-Niven numbers of that degree, with all digits different from zero, exist. 
\end{theorem}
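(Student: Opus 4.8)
The plan is to compute the base-$b$ expansion of $N_k^n$ explicitly, read off its digit sum, and then deduce the required divisibility from the arithmetic conditions on $b$. Throughout write $M=2^k$ and $y=b^M$, so that $N_k=y-1=b^M-1$ is the string of $M$ digits all equal to $b-1$; hence $s_b(N_k)=M(b-1)=2^k(b-1)$. First I would dispose of the claim that $N_k$ itself is $b$-Niven. Since $b=(4\ell+2)p+1$ is odd, every factor in the factorization $b^{2^k}-1=(b-1)\prod_{i=0}^{k-1}(b^{2^i}+1)$ is even, so the $k$ factors $b^{2^i}+1$ already supply $2^k$ while $(b-1)$ appears literally; thus $2^k(b-1)=s_b(N_k)$ divides $N_k$.

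For the power I would start from the binomial expansion $(y-1)^n=\sum_{j=0}^n(-1)^{n-j}C_n^{j}\,y^j$, viewed as a base-$y$ numeral with signed coefficients $c_j=(-1)^{n-j}C_n^{j}$. Because $n$ is odd these signs alternate $-,+,-,\dots$ from $j=0$, and the hypothesis $b\ge C_n^{\frac{n+1}{2}}$ guarantees $C_n^{j}\le b\le y$ for every $j$. I would then perform the borrowing that converts this into a genuine base-$y$ numeral: each negative coefficient (even $j$) borrows a single unit from its positive neighbour at position $j+1$, leaving base-$y$ digits equal to $y-C_n^{j}$ in the even positions $j=0,2,\dots,n-1$ and $C_n^{j}-1$ in the odd positions $j=1,3,\dots,n-2$, while the top coefficient at $j=n$ is reduced to $0$. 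The step I expect to be the main obstacle is verifying that this single pass of borrowing never cascades — exactly what the bound $C_n^{j}\le b\le y$ secures — so that all digits remain in $\{0,\dots,y-1\}$.

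Since each base-$y$ digit occupies a block of $M$ base-$b$ digits with no interaction between blocks, $s_b$ is additive over the blocks. Using $s_b(b^M-c)=(M-1)(b-1)+(b-c)$ for $1\le c\le b$ and $s_b(c-1)=c-1$ for $1\le c\le b$, together with the identities $\sum_{j\text{ even}}C_n^{j}=\sum_{j\text{ odd}}C_n^{j}=2^{n-1}$, I would add the contributions of the $\frac{n+1}{2}$ even positions and the $\frac{n-1}{2}$ odd positions. The binomial sums telescope and the total collapses to
\[
s_b(N_k^n)=\frac{n+1}{2}\,2^k(b-1)=\frac{n+1}{2}\,s_b(N_k).
\]

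Finally I would prove $s_b(N_k^n)\mid N_k^n$ for $n\ge 2$ (the case $n=1$ being the Niven property of $N_k$ established above). Writing $\frac{n+1}{2}=2^qp$ and $b-1=(4\ell+2)p=2(2\ell+1)p$, the digit sum factors as $2^{\,q+k+1}(2\ell+1)p^2$, and I would treat its $2$-part and its odd part separately and multiply them, as they are coprime. For the odd part, $b-1\mid b^{2^k}-1=N_k$ gives $(2\ell+1)p\mid N_k$, whence $(2\ell+1)p^2\mid\big((2\ell+1)p\big)^2\mid N_k^n$. For the $2$-part, the factorization above yields $v_2(N_k)\ge k+2$ (from $v_2(b-1)\ge1$, $v_2(b+1)\ge2$, and $k-1$ further even factors), so $v_2(N_k^n)=n\,v_2(N_k)\ge n(k+2)\ge q+k+1$, the last inequality following from $n\ge 2^{q+1}-1$. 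This gives $s_b(N_k^n)\mid N_k^n$, so $N_k$ is a degree $n$ $b$-Niven number, and since all digits of $N_k=b^{2^k}-1$ equal $b-1\ne0$ it has no zero digit. The ``in particular'' claim is then immediate: for fixed odd $n$ the hypotheses constrain $b$ only to be of the form $(4\ell+2)p+1$ with $b\ge C_n^{\frac{n+1}{2}}$, and infinitely many $\ell$ satisfy this.
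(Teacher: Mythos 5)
Your proposal is correct and follows essentially the same route as the paper: expand $(b^{2^k}-1)^n$ binomially, convert it to an explicit base-$b$ numeral via a single non-cascading borrow (justified by $b\ge C_n^{\frac{n+1}{2}}$), obtain $s_b(N_k^n)=\frac{n+1}{2}2^k(b-1)$, and then deduce divisibility from $2p\mid b-1$. The only differences are cosmetic bookkeeping: you extract the power of $2$ dividing $N_k$ from the factorization $(b-1)\prod_{i=0}^{k-1}(b^{2^i}+1)$ where the paper invokes Euler's theorem, and you split $s_b(N_k^n)$ into its $2$-part and odd part where the paper instead shows $(b-1)2^k\mid N_k$ and $\frac{n+1}{2}\mid N_k^{n-1}$ separately.
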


The proof of Theorem \ref{thm:2} is done in Section \ref{sec:3}.

Theorems \ref{thm:1} and \ref{thm:2} have the following corollary.

\begin{corollary} For any degree there exists an infinite set of bases in which $b$-Niven numbers of that degree, with all digits different from zero, exist. 
\end{corollary}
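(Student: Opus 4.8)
The plan is to reduce the corollary to the two preceding theorems by splitting on the parity of the prescribed degree $d$, since between them Theorems \ref{thm:1} and \ref{thm:2} cover all even and all odd degrees respectively. First I would fix an arbitrary degree $d\ge 1$ and work with the number $N_k=[b^{2^{k}}-1]_b$, whose base-$b$ representation is the block of $2^k$ digits all equal to $b-1$; since $b\ge 2$ forces $b-1\ge 1$, this number automatically has all digits different from zero, which settles the nonvanishing-digits requirement for whatever value of $b$ is eventually selected.

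For the even case I would set $d=2n$, write $n=2^qp$ with $p$ odd, and invoke Theorem \ref{thm:1}: every base of the form $b=(4\ell+2)p+1$ with $\ell\ge 0$ and $b\ge C_{2n}^n$ makes $N_k$ a degree $2n=d$ $b$-Niven number. For the odd case I would set $d=n$; because $n$ is odd, $\frac{n+1}{2}$ is a genuine integer, so I may write $\frac{n+1}{2}=2^qp$ with $p$ odd and apply Theorem \ref{thm:2}, which yields the same conclusion for bases $b=(4\ell+2)p+1$ with $b\ge C_n^{\frac{n+1}{2}}$. In either subcase the admissible bases form the arithmetic progression $\{(4\ell+2)p+1:\ell\ge 0\}$ of common difference $4p$, which is infinite and unbounded; hence all but finitely many of its terms clear the relevant binomial threshold, producing an infinite set of bases for the given degree.

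The step I expect to need the most care is merely confirming that the two theorems jointly exhaust all degrees and that their feasibility conditions can always be met: every $d$ is even or odd, and in each subcase the required factorization ($n=2^qp$, or $\frac{n+1}{2}=2^qp$) exists for any positive integer, so there is never an obstruction in choosing $p$, $q$, and $\ell$. The genuine difficulty --- computing $s_b(N_k^{d})$ from the carry-free binomial expansion of $(b^{2^{k}}-1)^{d}$ and verifying the divisibility $s_b(N_k^{d})\mid N_k^{d}$ --- is already absorbed into Theorems \ref{thm:1} and \ref{thm:2}, so the corollary itself demands no estimation beyond the parity dichotomy and the unboundedness of the progression of bases.
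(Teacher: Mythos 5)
Your proposal is correct and follows exactly the argument the paper intends: the corollary is an immediate consequence of Theorems \ref{thm:1} and \ref{thm:2} via the parity split on the degree, with the infinitude of valid bases coming from the arithmetic progression $b=(4\ell+2)p+1$ having only finitely many terms below the binomial-coefficient threshold. Nothing further is needed.
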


We do not know how to answer the following question.

\begin{question} Does there exist a base $b$ for which $b$-Niven numbers of arbitrary high degree, with all digits different from zero, exist?
\end{question}

The following theorem shows an infinite set of bases containing $b$-Niven numbers of multiple consecutive high degrees, with all digits different from zero.

\begin{theorem}\label{thm:3} Let $b\ge 2, d\ge 1$. Consider the sequence $(N_k)_{k\ge 1}$, where $N_k=[b^{2^k}-1]_b$. Then there exists an infinite set of bases $b$ in which the sequence $(N_k)_{k\ge 1}$ consists of $b$-Niven numbers of degree $i$ for all $1\le i\le d$. 
\end{theorem}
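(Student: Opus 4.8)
The plan is to reduce everything to two ingredients: an exact formula for $s_b(N_k^i)$ that holds for all $1\le i\le d$ at once, and a single congruence on $b$ that forces the required divisibilities for every degree simultaneously. Throughout write $B=2^k$, so that $N_k=b^B-1$ has base-$b$ expansion consisting of $B$ digits all equal to $b-1$; in particular all digits of $N_k$ are nonzero and $s_b(N_k)=B(b-1)$.

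First I would establish, for every $b\ge C_d^{\lfloor d/2\rfloor}$ and every $1\le i\le d$, the formula
\begin{equation*}
s_b\big(N_k^i\big)=\Big\lceil \tfrac{i}{2}\Big\rceil\, 2^k (b-1).
\end{equation*}
This is exactly the computation underlying Theorems \ref{thm:1} and \ref{thm:2}: expanding $N_k^i=(b^B-1)^i=\sum_{j=0}^{i}(-1)^{i-j}C_i^j\,b^{Bj}$ presents $N_k^i$ in base $b^B$ with the signed digits $(-1)^{i-j}C_i^j$, whose signs alternate. Because $b\ge C_d^{\lfloor d/2\rfloor}\ge C_i^j$ for all $0\le j\le i\le d$, a single borrowing pass from the least significant position converts these into genuine base-$b^B$ digits: each negative position becomes $b^B-C_i^j$ and each positive position (having absorbed exactly one borrow) becomes $C_i^j-1$, with the top digit vanishing. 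Summing the base-$b$ digit sums, using $s_b(b^B-c)=B(b-1)-c+1$ for $1\le c\le b$ together with $\sum_{j\text{ even}}C_i^j=\sum_{j\text{ odd}}C_i^j=2^{i-1}$, all binomial terms cancel and the stated value $\lceil i/2\rceil\,2^k(b-1)$ remains.

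Next I would turn the Niven conditions into divisibilities. Factor $N_k^i=(b-1)^i M^i$ with $M=\prod_{j=0}^{k-1}(b^{2^j}+1)$, and write $\lceil i/2\rceil=2^{a_i}m_i$ with $m_i$ odd. When $b$ is odd every factor of $M$ is even, so $2^k\mid M$ and hence $2^{ki}\mid M^i$; since $k(i-1)\ge i-1\ge a_i$ for $1\le i\le d$, this already supplies the full $2$-power $2^{a_i+k}$ that is needed. For the odd part it suffices that $m_i\mid b-1$, since then $m_i\mid (b-1)^{i-1}$. Combining these, $s_b(N_k^i)=m_i\,2^{a_i+k}(b-1)$ divides $(b-1)^iM^i=N_k^i$ for every $1\le i\le d$ and every $k\ge1$, so $N_k$ is a degree-$i$ $b$-Niven number for all $i\le d$.

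It remains to produce infinitely many admissible bases. Let $P$ be the odd part of $\operatorname{lcm}(1,2,\dots,\lceil d/2\rceil)$, so that $m_i\mid P$ for all $i\le d$, and impose $b\equiv 1\pmod{2P}$ together with $b\ge C_d^{\lfloor d/2\rfloor}$. Every such $b$ is odd (as $2\mid b-1$) and satisfies $P\mid b-1$, hence fulfils all the requirements above; the progression $b\equiv1\pmod{2P}$ clearly contains infinitely many integers beyond $C_d^{\lfloor d/2\rfloor}$, which gives the claim. I expect the main obstacle to be the second step: verifying rigorously that the alternating signed-digit expansion collapses under a single borrow pass with no interaction between the length-$B$ blocks, which is precisely where the hypothesis $b\ge C_d^{\lfloor d/2\rfloor}$ enters and where the cancellation producing the clean value $\lceil i/2\rceil\,2^k(b-1)$ must be checked. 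Once that formula is secured, the divisibility reduction and the choice of $b$ are routine.
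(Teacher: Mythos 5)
Your proposal is correct, and while the digit-sum computation at its core is the same one the paper uses (your unified formula $s_b(N_k^i)=\lceil i/2\rceil\,2^k(b-1)$ is exactly what Theorems \ref{thm:1} and \ref{thm:2} establish for even and odd $i$ separately, via the same borrow-pass on the alternating binomial expansion), your divisibility argument is genuinely different from the paper's and in fact stronger. The paper deduces $(b-1)2^k\mid N_k$ from Euler's theorem ($2^{k+1}\mid N_k$ since $\phi(2^{k+1})=2^k$) combined with $(b-1)\mid N_k$, and this forces the hypothesis $b\equiv 3\pmod 4$ so that $\gcd(b-1,2^{k+1})=2$; Theorem \ref{thm:3} is then assembled by citing Theorems \ref{thm:1} and \ref{thm:2} and solving the resulting system of congruences, giving bases of the form $b=(4\ell+2)p_1\cdots p_d+1$. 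Your factorization $N_k=(b-1)\prod_{j=0}^{k-1}(b^{2^j}+1)$ delivers $2^k(b-1)\mid N_k$ directly for every odd $b$, and the split of $s_b(N_k^i)=2^{a_i+k}m_i(b-1)$ into a $2$-power dividing $M^i$ and an odd part $m_i(b-1)$ dividing $(b-1)^i$ is clean (the inequality $k(i-1)\ge a_i$ and the check $m_i\mid P\mid b-1$ both hold as you state). What this buys: you drop $b\equiv 3\pmod 4$ entirely, so your admissible set $\{b\equiv 1\pmod{2P},\ b\ge C_d^{\lfloor d/2\rfloor}\}$ is strictly larger than the paper's (it includes bases $\equiv 1\pmod 4$), and your modulus $2P$ with $P$ the odd part of $\operatorname{lcm}(1,\dots,\lceil d/2\rceil)$ is smaller than the paper's product $p_1\cdots p_d$; you also silently avoid a slip in the paper's list of conditions, where condition (4) names the odd part of $i$ rather than the odd part of $\lceil i/2\rceil$ required by Theorem \ref{thm:2} (the paper's final formula happens to be correct anyway because the product over all $j\le d$ absorbs the needed factors). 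What the paper's route buys is brevity: Theorem \ref{thm:3} becomes a near-immediate corollary of the two earlier theorems. The one place where your write-up is only a sketch is the same place the paper is terse, namely the verification that the borrow pass produces the block digits $(C_i^j-1)(b-1)^{\land 2^k-1}(b-C_i^{j'})(0)^{\land\cdots}$ with no interaction between blocks; you flag this correctly, and it is where $b\ge C_d^{\lfloor d/2\rfloor}$ is used.
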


\begin{proof} The assumptions about $b$ needed to apply Theorems \ref{thm:1} and \ref{thm:2} for $1\le i\le d$  are as follows:
\begin{enumerate}
\item $b\ge C_{2i}^i$ if $i$ is even;
\item $b\ge C_{i}^{\frac{i+1}{2}}$ if $i$ is odd;
\item $b\equiv 3\pmod{4}$;
\item $b\equiv 1\pmod{p_i}$, where $i=2^{\ell_i}p_i$ with $p_i$ odd.
\end{enumerate}

The first two conditions are valid for $b\ge C_{2d}^d$, if $d$ is even, and for $b\ge C_{d}^{\frac{d+1}{2}}$, if $d$ is odd. The system of congruences has an infinite set of solutions given by:
\begin{equation*}
b=(4\ell+2)p_1p_2\cdots p_d+1, \ell \ge 1.
\end{equation*}
\end{proof}

\begin{example}\label{ex:some} Assume $b\equiv 3\pmod{4}$ and consider the sequence $(N_k)_{k\ge 1}$, where $N_k=[b^{2^k}-1]_b$. The numbers $N_k$ are degree $2$ $b$-Niven numbers and degree $3$ $b$-Niven numbers. If $b\ge 6$, then $N_k$ also are degree $4$ $b$-Niven numbers.
If  $b\ge 10$ and $b\equiv 1\pmod{3}$, then $N_k$ also are degree $5$ $b$-Niven numbers. If $b\ge 20$ and $b\equiv 1\pmod{3}$, then $N_k$ also are degree $6$ $b$-Niven numbers. If $b\ge 35$, then $N_k$ also are degree $7$ $b$-Niven numbers. If $b\ge 70$, then $N_k$ also are degree $8$ $b$-Niven numbers. If  $b\ge 126$ and $b\equiv 1\pmod{5}$ then $N_k$ also are degree $9$ $b$-Niven numbers. If $b\ge 252$ and $b\equiv 1\pmod{5}$ then $N_k$ also are degree $10$ $b$-Niven numbers. If $b\ge 66$ and $b\equiv 1\pmod{3}$, then $N_k$ also are degree $11$ $b$-Niven numbers. If  $b\ge 924$ and $b\equiv 1\pmod{3}$, then $N_k$ also are degree $12$ $b$-Niven numbers. If  $b\ge 1716$ and $b\equiv 1\pmod{7}$, then $N_k$ also are degree $13$ $b$-Niven numbers. If  $b\ge 3432$ and $b\equiv 1\pmod{7}$, then $N_k$ also are degree $14$ $b$-Niven numbers. If $b\ge 6435$, then $N_k$ also are degree $15$ $b$-Niven numbers.

Summing up we conclude that if $b\ge 6435$ and $b=105(4\ell+2)+1, \ell \ge 1$, then the numbers $N_k$ are degree $i$ $b$-Niven numbers for all $1\le i\le 15$. The smallest base $b$ that satisfies above conditions is $b=6511$. The base $b$ representations for $N_k$ and its powers can easily be derived using formulas \eqref{eq:contr} and \eqref{eq:contr1} from the proofs of Theorem \ref{thm:1} and \ref{thm:2}. For example:
\begin{itemize}
\item $N_1=[(6510)(6510)]_{6511}$
\item $N_1^2=[(6510)(6509)(0)(1)]_{6511}$
\item $N_1^3=[(6510)(6508)(0)(2)(6510)(6510)]_{6511}$
\item $N_1^4=[(6510)(6507)(0)(5)(6510)(6507)(0)(1)]_{6511}$
\item $N_1^5=[(6510)(6506)(0)(9)(6501)(6501)(0)(4)(6510)(6510)]_{6511}$.
\end{itemize}
\end{example}

\begin{question} Due to the condition $b\equiv 3\pmod{4}$, the bases claimed in Theorem \ref{thm:3} are all odd. Is the result in Theorem \ref{thm:3} true for an infinite set of even bases?
\end{question}

\section{Proof of Theorem \ref{thm:1}}\label{sec:2}

Note that the set $\{b\vert b=(4\ell+2)p+1, \ell \ge 0\}$ is the solution of the system of equations $b\equiv 3\pmod{4}, b\equiv 1\pmod{p}$.

An equivalent representation for $N_k$ is $[(b-1)^{\land 2^{k}}]_b$, so $s_b(N_k)=(b-1)2^{k}$. As $b$ is odd, $\gcd(b,2)=1$. As $\phi(2^{k+1})=2^k$ Euler's theorem gives that $2^{k+1}$ divides $N_k$. In addition $b-1$ divides $N_k$. As $b\equiv 3\pmod{4}$, $\gcd(b-1,2)=2$. It follows that $(b-1)2^{k}$ divides $N_k$, so $N_k$ is a $b$-Niven number. 

We compute $s_b(N_k^{2n})$ using the assumption $b\ge C_{2n}^n$. From binomial formula follows that:
\begin{equation}\label{eq:sum1}
\begin{gathered}
N_k^{2n}=(b^{2^{k}}-1)^{2n}=\sum_{i=1}^{n}\Big ( C_{2n}^{2i}b^{2i\cdot 2^{k}}-C_{2n}^{2i-1}b^{(2i-1)\cdot 2^{k}}\Big ) +1.
\end{gathered}
\end{equation}

Each one of the difference $C_{2n}^{2i}b^{2i\cdot 2^{k}}-C_{2n}^{2i-1}b^{(2i-1)\cdot 2^{k}}$ in \eqref{eq:sum1} has a base $b$ representation given by:
\begin{equation}\label{eq:contr}
[(C_{2n}^{2i}-1)(b-1)^{\land 2^{k}-1}(b-C_{2n}^{2i-1})(0)^{\land (2i-1)\cdot 2^{k}-1}]_b.
\end{equation}

As the contributions \eqref{eq:contr} to the base $b$ representation of $N_k^{2n}$ do not overlap one has that:
\begin{equation}\label{eq:contr1}
\begin{aligned}
s_b(N_k^{2n})&=n(b-1)(2^{k}-1)\\
&+(b-C_{2n}^1)+(b-C_{2n}^3)+\cdots + (b-C_{2n}^{2n-1})\\
&+(C_{2n}^2-1)+(C_{2n}^4-1)+\cdots +(C_{2n}^{2n-2}-1)\\
&+1,
\end{aligned}
\end{equation}
where second line in \eqref{eq:contr1} has $n$ terms and third line has $n-1$ terms.

If one rearranges the terms in \eqref{eq:contr1} and uses that:
\begin{equation*}
\begin{gathered}
C_{2n}^1+C_{2n}^3+\cdots +C_{2n}^{2n-1}=2^{2n-1}\\
C_{2n}^2+C_{2n}^4+\cdots +C_{2n}^{2n-2}=2^{2n-1}-2,
\end{gathered}
\end{equation*}
formula \eqref{eq:contr1} becomes:
\begin{equation*}
\begin{aligned}
s_b(N_k^{2n})&=n(b-1)(2^{k}-1)+nb-2^{2n-1}+2^{2n-1}-2-(n-1)+1\\
&=n(b-1)2^{k}.
\end{aligned}
\end{equation*}

It remains to show that $n(b-1)2^{k}$ divides $N_k^{2n}$. From above it follows that $(b-1)2^{k}$ divides $N_k$. To finish the proof we show that $n$ divides $N_k^{2n-1}$. 

The assumption $b\equiv 3\pmod{4}$ implies that $2$ divides $b-1$ and the assumption $b\equiv 1\pmod{p}$ implies that $p$ divides $b-1$. 
As $\gcd(2,p)=1$, $2p$ divides $b-1$. But $b-1$ divides $N_k$, so 
\begin{equation}\label{eq1}
2^{2n-1}p\vert N_k^{2n-1}.
\end{equation} 
Due to the fact, easily proved by induction, that $2^{\ell}$ is a factor of $2^{2^{\ell} -1}$ for $\ell \ge 1$, one has that
\begin{equation}\label{eq2}
2^q\vert 2^{2^q-1}\vert 2^{2^q-1}\cdot 2^{2^{q+1}p-2^q}=2^{2^{q+1}p-1}=2^{2n-1}.
\end{equation}

Combining \eqref{eq1} and \eqref{eq2} and using that $\gcd(p, 2^q)=1$ one has that $n=2^qp$ divides $N_k^{2n-1}$.
 
\section{Proof of Theorem \ref{thm:2}}\label{sec:3} 

Note that the set $\{b\vert b=(4\ell+2)p+1, \ell \ge 0\}$ is the solution of the system of equations $b\equiv 3\pmod{4}, b\equiv 1\pmod{p}$.

It follows from the first paragraph of Section \ref{sec:2} that $N_k$ is a $b$-Niven number. 

We compute $s_b(N_k^{n})$ using the assumption that $b\ge C_{n}^\frac{n+1}{2}$. From binomial formula follows that:
\begin{equation}\label{eq:sum1*}
\begin{gathered}
N_k^{n}=(b^{2^{k}}-1)^{n}=\sum_{i=1}^{\frac{n+1}{2}}\Big ( C_{n}^{n+2-2i}b^{(n+2-2i)\cdot 2^{k}}-C_{n}^{n+1-2i}b^{(n+1-2i)\cdot 2^{k}}\Big ).
\end{gathered}
\end{equation}

Each one of the difference $C_{n}^{n+2-2i}b^{(n+2-2i)\cdot 2^{k}}-C_{n}^{n+1-2i}b^{(n+1-2i)\cdot 2^{k}}$ in \eqref{eq:sum1*} has a base $b$ representation given by:
\begin{equation}\label{eq:contr*}
[(C^{n+2-2i}_{n}-1)(b-1)^{\land 2^{k}-1}(b-C_{n}^{n+1-2i})(0)^{\land (n+1-2i)\cdot 2^{k}-1}]_b.
\end{equation}

As the contributions \eqref{eq:contr*} to the base $b$ representation of $N_k^{n}$ do not overlap one has that:
\begin{equation}\label{eq:contr1*}
\begin{aligned}
s_b(N_k^{n})&=\frac{n+1}{2}(b-1)(2^{k}-1)\\
&+(b-C_{n}^{n-1})+(b-C_{n}^{n-3})+\cdots + (b-C_{n}^{0})\\
&+(C_{n}^n-1)+(C_{n}^{n-2}-1)+\cdots +(C_{n}^{1}-1),
\end{aligned}
\end{equation}
where second and third lines in \eqref{eq:contr1*} have $\frac{n+1}{2}$ terms.

If one rearranges the terms in \eqref{eq:contr1*} and uses that:
\begin{equation}
\begin{gathered}
C_{n}^0+C_{n}^2+\cdots +C_{n}^{n-1}=2^{n-1}-1\\
C_{n}^1+C_{n}^3+\cdots +C_{n}^{n}=2^{n-1}-1,
\end{gathered}
\end{equation}
formula \eqref{eq:contr1*} becomes:
\begin{equation}
\begin{aligned}
s_b(N_k^{n})&=\frac{n+1}{2}(b-1)(2^{k}-1)+\frac{n+1}{2}b-2^{n-1}+2^{n-1}-\frac{n+1}{2}\\
&=\frac{n+1}{2}(b-1)2^{k}.
\end{aligned}
\end{equation}

It remains to show that $\frac{n+1}{2}(b-1)2^{k}$ divides $N_k^{n}$. From above it follows that $(b-1)2^{k}$ divides $N_k$. To finish the proof of the theorem we show that $\frac{n+1}{2}$ divides $N_k^{n-1}$. 

The assumption $b\equiv 3\pmod{4}$ implies that $2$ divides $b-1$ and the assumption $b\equiv 1\pmod{p}$ implies that $p$ divides $b-1$. 
Overall, as $\gcd(2,p)=1$, $2p$ divides $b-1$. As $b-1$ divide $N_k$ we conclude that:
\begin{equation}\label{eq3}
2^{n-1}p\vert N_k^{n-1}.
\end{equation} 
Due to the fact, easily proved by induction, one has that:
\begin{equation}\label{eq4}
2^q\vert 2^{2^q-1}\vert 2^{2^q-1}\cdot 2^{2^q\cdot p-2^q}=2^{2^q\cdot p-1}=2^{n-1}.
\end{equation}

Combining \eqref{eq3} and \eqref{eq4}, and using that $\gcd(2^q,p)=1$ one has that  $\frac{n+1}{2}=2^qp$ divides $N_k^{n-1}$.

\end{document}